\theoremstyle{plain} \newtheorem{theorem}{Theorem}[section]
\theoremstyle{plain} \newtheorem{corollary}[theorem]{Corollary}
\theoremstyle{plain} \newtheorem{proposition}[theorem]{Proposition}
\theoremstyle{plain}\newtheorem{lemma}[theorem]{Lemma}
\theoremstyle{definition} \newtheorem{definition}[theorem]{Definition}
\theoremstyle{definition}
\theoremstyle{remark}\newtheorem{remark}[theorem]{Remark}
\theoremstyle{definition}
\theoremstyle{remark}
\theoremstyle{definition}
\theoremstyle{definition}\newtheorem{problem}[theorem]{Problem}
\newcommand{\C}{{\mathbb{C}}}
\newcommand{\X}{\mathcal X}
\newcommand{\T}{\mathscr{T}(M)}
\newcommand{\D}{\text{\rm Diff}^0(M)}
\newcommand{\I}{\mathcal I(M)}
\newcommand{\Azero}{\text{\rm Aut}^0(X_0)}
\newcommand{\Aun}{\text{\rm Aut}^1(X_0)}
\newcommand{\A}{\text{\rm Aut}(X_0)}
\numberwithin{equation}{section}
\title[Kuranishi and Teichm\"uller]{Kuranishi and Teichm\"uller}
\author{Laurent Meersseman}
\date{\today}
\subjclass{32G05, 
58H05, 
14D23 
}
\address{Laurent Meersseman\\
	Univ Angers, CNRS, LAREMA, SFR MATHSTIC\\
	F-49000 Angers, France\\ laurent.meersseman@univ-angers.fr}
\begin{document}
\begin{abstract}
The goal of this short article is to describe the local structure of the Teichm\"uller stack of \cite{LMStacks} in the neighborhood of a K\"ahler point. In particular we show that at a generic K\"ahler point $X$, Catanese Kur=Teich question, when interpretated at the level of stacks, has an affirmative answer. The situation may be much more complicated if $X$ is non-K\"ahler suggesting that Teichm\"uller spaces/stacks of non-K\"ahler manifold has a much richer geometry. 
\end{abstract}

\maketitle

\section{Introduction.}
\label{intro}

Let $X_0$ be a compact complex manifold with underlying $C^\infty$ manifold denoted by $M$. There are traditionnally two ways of describing the complex structures {\slshape near} $X_0$. From the one hand, one may (try to) construct a moduli space of complex structures on $M$ and look at a neighborhood of the class of $X_0$ in this moduli space. From the other hand, one may focus on small deformations of $X_0$ and look for a deformation from which all other deformations can be obtained by pull-back, after restriction to an adequate neighborhood of the base point.

Teichm\"uller refers to the Teichm\"uller space, that is the set of classes of complex manifolds diffeomorphic to $M$ up to biholomorphism smoothly isotopic to the identity; hence to the first setting. Kuranishi refers to the Kuranishi semi-universal deformation; hence to the second setting.

In complex dimension one, the Teichm\"uller space has a natural structure of a complex manifold and the base of the Kuranishi deformation of $X_0$ is a (germ of) neighborhood of the class of $X_0$ in the Teichm\"uller space. So the link between the two notions is direct.

In complex dimension strictly greater than one, the Teichm\"uller space is just a topological space, usually non Hausdorff and non locally Hausdorff (see \cite{LMStacks}, Examples 13.3 and 13.6); and there exists a surjective continuous mapping from the ($\C$-analytic) base of the Kuranishi family onto a neighborhood of the class of $X_0$ in the Teichm\"uller space. Catanese asked in \cite{Catsurvey}, see also \cite{Catsurvey2}, for conditions under which these two spaces are locally homeomorphic and proved it is the case in several cases including the case with trivial canonical bundle. However, due to usual non-local Hausdorffness, the existence of such a local homeomorphism is a rather restrictive property. And a useful criterion is missing. We use the slogan Kur=Teich to refer to this question and to a positive anwer to it.

In \cite{LMStacks}, we replace the Teichm\"uller space with the Teichm\"uller stack, an analytic Artin stack over the category of $\C$-analytic spaces (see also \cite{Cortona} for a comprehensive presentation). On the way of exhibiting an atlas for this stack, we also construct Kuranishi stacks which are, roughly speaking, the quotient of Kuranishi base spaces by the automorphism group of the base points.

Catanese's question Kur=Teich becomes in this new framework to find conditions for these two stacks to be locally isomorphic. It is now more natural since we stick to the analytic world. 

The goal of this short article is to show that, in the K\"ahler setting, we have Kur=Teich at a generic point of the Teichm\"uller space. We obtain this result as a consequence of a much more detailed statement. Indeed, we give a complete description of the Teichm\"uller stack around a point encoding a K\"ahler structure by showing in Theorem \ref{finitethm} that the natural inclusion of stacks $\text{Kur}\subset\text{Teich}$ is a finite \'etale analytic morphism. We then prove that Kur=Teich outside a strict analytic substack of Teich that we characterize in Theorem \ref{2ndmainthm}. 

We point out that such a result does not hold at the level of the Riemann moduli stack since the mapping class group of a, say projective, manifold can act on the Teichm\"uller stack with dense orbits (this is the case for $2$-dimensional tori \cite{K-S1} or for Hyperk\"ahler manifolds \cite{VerbitskyErgodic}), hence the inclusion of Kur in this moduli stack may be far from being a finite morphism. 

Moreover, at a non-K\"ahler point, the situation may be much more complicated and the finiteness property is also lost. This is only a theoretical statement and we unfortunately lack of examples. Indeed, we do not know of a single example with infinite fibers. However, our results and methods strongly suggest that they should exist and point towards a dichotomy between points with the above mentionned inclusion being a finite \'etale morphism (including but not equal to K\"ahler ones) and points with infinite fibers (which have to be non-K\"ahler). As a consequence, the local structure of the Teichm\"uller stack is much more singular in a sense at a non-K\"ahler point. If correct, this would really be surprising since, at the level of the Kuranishi space (and Kuranishi stack), there is no difference between K\"ahler and non-K\"ahler manifolds: the Kuranishi space of a K\"ahler, even of a projective, manifold can exhibit all the pathologies (for example not irreducible \cite{Horikawa}, not reduced \cite{Mumford}, arbitrary singularities \cite{Vakil}) the Kuranishi space of a non-K\"ahler one can have. In the same way, as noted above, there is no difference between them at the level of the Riemann moduli stack. {\sl This difference only appear when considering the Teichm\"uller stack} and suggests that the full complexity of the Teichm\"uller stack is only seen at non-K\"ahler points {\sl hence that its geometry cannot be fully understood without dealing with non-K\"ahler manifolds}.

We also prove two related results in the paper. Firstly, we show in Theorem \ref{thmuniversal} that the germ of Kuranishi stack at a point has a universal property. This generalizes the semi-universality property of the Kuranishi space. At a rough level, this is folklore (see for example \cite{VerbitskySurvey}), but we never saw a precise statement of such a property, probably because the stack setting developed in \cite{LMStacks} is necessary to a clear formulation. Secondly, we investigate when the local Teichm\"uller stack is an orbifold. We solve this question in Theorem \ref{mainorbifoldthm} in the K\"ahler setting.

\section{The Teichm\"uller Stack: basic facts}
\label{stack}
We recollect some facts about the Teichm\"uller stack of a connected, compact oriented $C^\infty$ manifold $M$ admitting complex structures. We refer to \cite{LMStacks} for more details.

Let $\mathfrak S$ be the category of analytic spaces and morphisms endowed with the transcendantal topology. Given $S\in \mathfrak S$, we call {\it $M$-deformation over $S$} a proper and smooth morphism $\mathcal X\to S$ whose fibers are compact complex manifolds diffeomorphic to $M$. As $C^\infty$-object, such a deformation is a bundle over $S$ with fiber $M$ and structural group $\text{Diff}^+(M)$ (diffeomorphisms of $M$ that preserve its orientation). It is called {\it reduced} if the structural group is reduced to $\text{Diff}^0(M)$. In the same way, a morphism of reduced $M$-deformations $\mathcal X$ and $\mathcal X'$ over an analytic morphism $f: S\to S'$ is a cartesian diagram
$$
\begin{tikzcd}
\mathcal X \arrow[d]\arrow[r]&\mathcal X'\arrow[d]\\
S \arrow[r,"f"]&S'
\end{tikzcd}
$$
such that $\mathcal X$ and $f^*\mathcal X'$ are isomorphic as $\text{Diff}^0(M)$-bundles over $S$.

The Teichm\"uller stack $\T$ is defined as the stack over the site $\mathfrak S$ such that 
\begin{enumerate}
\item[i)] $\T(S)$ is the groupoid of isomorphism classes of reduced $M$-deformations over $S$.
\item[ii)] $\T(f)$ is the pull-back morphism $f^*$ from  $\mathscr{T}(M)(S')$ to $\mathscr{T}(M)(S)$.
\end{enumerate}

A point $X_0:=(M,J_0)$ is an object of $\mathscr{T}(M)(pt)$ that is a complex structure on $M$ up to biholomorphisms smoothly isotopic to the identity. 

Alternatively, $\T$ can be considered as an analytic version of the quotient $\mathcal I(M)/\text{Diff}^0(M)$. Here, $\mathcal I(M)$ is the set of integrable complex operators on $M$ compatible with its orientation (o.c.), that is
\begin{equation}
\label{I}
\mathcal I(M)=\{J\ :\ TX\longrightarrow TX\mid J^2\equiv -Id,\ J\text{ o.c.},\ \ [T^{1,0}, T^{1,0}]\subset T^{1,0}\}
\end{equation}
for
$$
T^{1,0}=\{v-iJv\mid v\in TX\}.
$$
and $\text{Diff}^0(M)$ is the group of diffeomorphisms of $M$ which are $C^\infty$-isotopic to the identity. 

Two points have to be emphasized here. Firstly, as stack, $\T$ also encodes the isotropy groups of the action. Recall that the isotropy group at $X_0$ is the group
\begin{equation}
\label{Aut1def}
\text{Aut}^1(X_0):=\text{Aut}(X_0)\cap \text{Diff}^0(M).
\end{equation}
which may be different from $\Azero$, the connected component of the identity of the automorphism group $\A$, see \cite{Aut1}. 
Secondly, the action of $\D$ onto $\I$ is not a holomorphic action but an action by biholomorphisms. Indeed $\D$ can be endowed with a complex structure as an open set of the complex Fr\'echet space of $C^\infty$-maps from $M$ to $X_0$, but this complex structure depends on $X_0$, that is depends on the choice of a complex structure on $M$. Taking this into account means putting as complex structure on $\D\times \I$ not a product structure but the structure such that $\D\times \{J\}$ is an open set of the complex Fr\'echet space of $C^\infty$-maps from $M$ to $X_J$. In other words, $\D\times \I$ is endowed with a complex structure as an open set of the complex Fr\'echet space of $C^\infty$-maps from $M\times\I$ to $\X_{\I}$ the tautological family over $\I$.

 Since $\D$ acts on the (infinite-dimensional) analytic space $\I$ preserving its connected components and its irreducible components, we may speak in this way of connected components and irreducible components of $\T$.  Indeed Kuranishi's Theorem tells us that there exist local analytic sections $K_0$ of finite dimension at each point $X_0=(M,J_0)$ of $\I$. Hence, locally, the irreducible components of $\I$ at $J_0$ are those of the finite-dimensional space $K_0$.

In \cite{LMStacks}, a finite-dimensional atlas of (a connected component of)  $\T$ is described under the hypothesis that the dimension of the automorphism group of the complex manifolds encoded in $\T$ is bounded. The rough idea is that the $\text{Diff}^0(M)$-action on $\mathcal I(M)$, though not locally free when the complex structures admit holomorphic vector fields, defines a sort a foliation that we call a TG foliation. A holonomy groupoid can be defined for this sort of foliation and gives the desired atlas.

Such a groupoid is obtained by taking a complete set of local tranversals to the foliation and considering its quotient through the holonomy morphisms. In our situation, the transversal at a point $X_0$ of $\mathscr{T}(M)$ is the Kuranishi stack. It is build from $K_0$, see \cite[\S 2.3]{LMStacks} and Section \ref{Kurstack}. 

\section{The Kuranishi stack}
\label{localstack}
In the first two sections, we review the construction of the Kuranishi family first from classical deformation theory point of view, then from Kuranishi-Douady's point of view.

We then review the construction of the Kuranishi stack(s) introduced in \cite{LMStacks} and finally describe some important properties of them.

It is worth pointing out that the classical point of view (which presents Kuranishi family from a formal/algebraic point of view leaving aside the analytic details of the construction) is not enough for our purposes. This is indeed an infinitesimal point of view and even if it gives complete equations for the Kuranishi space, it fails in describing the properties of the structures close to the base complex structure. Kuranishi-Douady's point of view allows to pass from the infinitesimal point of view to a local one.

\subsection{The Kuranishi family}
\label{Kurfamily}

The Kuranishi family $\pi : \mathscr{K}_0\to K_0$ of $X_0$ is a semi-universal deformation of $X_0$. It comes with a choice of a marking, that is of an isomorphism $i$ between $X_0$ and the fiber $\pi^{-1}(0)$ over the base point $0$ of $K_0$. The semi-universal property means that
\begin{enumerate}[i)]
\item Every marked deformation $\mathscr{X}\to B$ of $X_0$ is locally isomorphic to the pull-back of the Kuranishi family by a pointed holomorphic map defined in a small neighborhood of the base point of $B$ with values in a neighborhood of $0$ in $K_0$.
\item Neither the mapping $f$ nor its germ at the base point are unique; but its differential at the base point is.
\end{enumerate}

Two such semi-universal deformations of $X_0$ are isomorphic up to restriction to a smaller neighborhood of their base points. Hence the germ of deformation $(\mathscr{K}_0,\pi^{-1}(0))\to (K_0,0)$ is unique. This explains why we talk of the Kuranishi family, even if, in many cases, we work with a representent of the germ rather than with the germ itself.

The Zariski tangent space to the Kuranishi space $K_0$ at $0$ identifies naturally with $H^1(X_0,\Theta_0)$. Indeed, $K_0$ is locally isomorphic to an analytic subspace of $H^1(X_0,\Theta_0)$ whose equations coincide at order 2 with the vanishing of the Schouten bracket.

The groups $\A$, $\Azero$ and $\Aun$ act on this tangent space.

However, this infinitesimal action cannot always be integrated in an action of the automorphism groups of $X_0$ onto $K_0$. Still there exists an action of each $1$-parameter subgroup and all these actions can be encoded in an analytic groupoid and thus in a stack. To do this, we need to know more about the complex properties of the structures encoded in a neighborhood of $0$ in $K_0$.

\subsection{Kuranishi-Douady's presentation and automorphisms}
\label{Douady}

Let $V$ be an open neighborhood of $J_0$ in $\I$. Complex structures close to $J_0$ can be encoded as $(0,1)$-forms $\omega$ with values in $T^{1,0}$ which satisfy the equation
\begin{equation}
\label{integ}
\bar\partial \omega+\dfrac{1}{2}[\omega,\omega]=0 
\end{equation}
Choose an hermitian metric and let $\bar\partial^*$ be the $L^2$-adjoint of $\bar\partial$ with respect to this metric. Let $U$ be a neighborhood of $0$ in $(T^{0,1})^*\otimes T^{1,0}$. Set
\begin{equation}
\label{K0}
K_0:=\{\omega\in U\mid \bar\partial \omega+\dfrac{1}{2}[\omega,\omega]=\bar\partial^*\omega=0\}
\end{equation} 
 Let $W$ an open neighborhood of $0$ in the vector space of vector fields $L^2$-orthogonal to the vector space of holomorphic vector fields $H^0(X_0,\Theta)$. In Douady's setting \cite{Douady}, Kuranishi's Theorem states a local isomorphism between $\I$ at $J_0$ and the product of $K_0$ with $W$ such that every plaque $\{pt\}\times L$ is sent through the inverse of this isomorphism into a single local $\D$-orbit. To be more precise, up to restricting $U$, $V$ and $W$, the Kuranishi mapping
\begin{equation}
\label{Kurmap}
(\xi,J)\in W\times K_0\longmapsto J\cdot e(\xi)\in V
\end{equation}
is an isomorphism of infinite-dimensional analytic spaces. As usual, we use the exponential map associated to a riemannian metric on $M$ in order to define the map $e$ which gives a local chart of $\D$ at $Id$. And $\cdot$ denotes the natural right action of $\D$ onto $\I$ given by 
\begin{equation}
\label{actionDiff}
J\cdot f:=df^{-1}\circ J\circ df
\end{equation}
Composing the inverse of \eqref{Kurmap} with the projection onto $K_0$ gives a retraction map $\Xi : V\to K_0$. Let now $f$ be an element of $\A$. There exists some maximal open set $U_f\subset K_0$ such that 
\begin{equation}
\label{Autaction}
\text{Hol}_f\ :\ J\in U_f\subset K_0\longmapsto \Xi(J\cdot f)\in K_0
\end{equation}
is a well defined analytic map. Observe that $\text{Hol}_f$ fixes $J_0$ and that it fixes each leaf of the foliation\footnote{The leaves correspond to the connected components in $K_0$ of the following equivalence relation: $J\equiv J'$ iff both operators belong to the same $\D$-orbit.}  of $K_0$ described in \cite[\S 3]{ENS}.

Of course, in \eqref{Autaction}, we may restrict ourselves to elements of $\Azero$ or $\Aun$.

\subsection{The Kuranishi stacks}
\label{Kurstack}
 The Kuranishi stacks encode the maps \eqref{Autaction} in an analytic groupoid. The first step to do this consists in proving that there is an isomorphism
 \begin{equation}
 \label{Gdecompoglobal}
 (\xi,g)\in W\times \text{\rm Aut}^0(X_0)\longmapsto g\circ e(\xi)\in \mathcal D_0
 \end{equation}
 with values in a neighborhood $\mathcal D_0$ of $\Azero$ in $\D$, see \cite[Lemma 4.2]{LMStacks}. 
 
 Let now $\text{Diff}^0 (M,\mathscr K_0)$ denote the set of $C^\infty$ diffeomorphisms from $M$ to a fiber of the Kuranishi family $\mathscr K_0\to K_0$. This is an infinite-dimensional analytic space\footnote{Strictly speaking, we have to pass to Sobolev $L^2_l$-structures for a big $l$ to have an analytic space, and $\text{Diff}^0 (M,\mathscr K_0)$ is the subset of $C^\infty$ points of this analytic set. In the sequel, we automatically make this slight abuse of terminology, cf. Convention 3.2 in \cite{LMStacks}.}, see \cite{Douady}. Here by $(J,F)\in \text{Diff}^0 (M,\mathscr K_0)$, we mean that we consider $F$ as a diffeomorphism from $M$ to the complex manifold $X_J$.
  
 Given $(J,F)$ an element of $\text{Diff}^0 (M,\mathscr K_0)$, we say it is $(V,\mathcal D_0)$-admissible if there exist a finite sequence $(J_i,F_i)$ (for $0\leq i\leq p$) of $\text{Diff}^0 (M,\mathscr K_0)$ such that
 \begin{enumerate}[i)]
 \item $J_0=J$ and each $J_i$ belongs to $K_0$.
 \item Each $F_i$ belongs to $\mathcal D_0$.
 \item $F=F_0\circ\hdots\circ F_p$
 \item $J_{i+1}=J_i\cdot F_i$
\end{enumerate}
 We set then
 \begin{equation}
 \label{Azero}
 \mathcal A_0=\{(J,F)\in \text{Diff}^0 (M,\mathscr K_0)\mid (J,F)\text{ is }(V,\mathcal D_0)\text{-admissible}\}
 \end{equation}
We also consider the two maps from $\mathcal A_0$ to $K_0$
 \begin{equation}
 \label{st}
 s(J,F)=J\qquad\text{ and }\qquad t(J,F)=J\cdot F
 \end{equation}
 and the composition and inverse maps
 \begin{equation}
 \label{multgroupoid}
 m((J,F),(J\cdot F, F'))=(J,F\circ F'),\qquad i(J,F)=(J\cdot F, F^{-1})
 \end{equation} 
 With these structure maps, the groupoid $\mathcal A_0\rightrightarrows K_0$ is an analytic groupoid \cite[Prop. 4.6]{LMStacks} whose stackification over $\mathfrak{S}$ is called {\slshape the Kuranishi stack} of $X_0$. We denote it by $\mathscr A_0$. Note that it depends indeed of the particular choice of $V$.
 
 As a category, its objects are still reduced $M$-deformations over bases belonging to $\mathfrak S$. However, the allowed complex structures are those encoded in $V$; and the allowed families are those obtained by gluing pull-back families of $\mathscr K_0\to K_0$ with respect to $(V,\mathcal D_0)$-admissible diffeomorphisms. In the same way, morphisms are those induced by $(V,\mathcal D_0)$-admissible diffeomorphisms. Hence, not only the complex fibers of the families have to be isomorphic to those of $\mathscr K_0$, but gluings and morphisms of families are restricted.
 
  Of course, the same construction can be carried out for the automorphism groups $\Aun$, resp. $\A$, with the obvious modifications. In \eqref{Gdecompoglobal}, $\Azero$ is replaced with $\Aun$, resp. $\A$, defining a neighborhood $\mathcal D_1$ of $\Aun$ in $\D$, resp. $\mathcal D$ of $\A$ in $\text{Diff}(M)$. This allows to speak of $(V,\mathcal D_1)$-admissible, resp. $(V,\mathcal D)$-admissible diffeomorphisms. Then, replacing $\mathcal D_0$ with $\mathcal D_1$, resp. $\mathcal D$ in \eqref{Azero} we obtain the analytic groupoid $\mathcal A_1\rightrightarrows K_0$, resp. $\mathcal A\rightrightarrows K_0$. Its stackification over $\mathfrak S$ gives a stack $\mathscr A_1$, resp. $\mathscr{A}$. The previous description of $\mathscr A_0$ as a category applies to $\mathscr{A_1}$, resp. $\mathscr A$ with the obvious changes. We also call them {\slshape Kuranishi stacks}.

\subsection{Universality and the Kuranishi stacks}
\label{universal}
Recall that Kuranishi's Theorem asserts the existence of a semi-universal deformation for any compact complex manifold. This is not however a universal deformation when the dimension of the automorphism group varies in the fibers of the Kuranishi family, i.e. in the setting of section \ref{Kurfamily}, the germ of mapping $f$ is not unique. Replacing the Kuranishi space with the Kuranishi stack allows to recover a universality property. 

To do that, we need to germify the Kuranishi stacks. We replace our base category $\mathfrak{S}$ with the base category $\mathfrak{G}$ of germs of analytic spaces. We turn $\mathfrak G$ into a site by considering the trivial coverings. Hence each object of $\mathfrak G$ has a unique covering and there is no non trivial descent data.

We then germify the groupoids. Starting with $\mathcal A\rightrightarrows K_0$, resp. $\mathcal A_0\rightrightarrows K_0$ and $\mathcal A_1\rightrightarrows K_0$, and using $s$ and $t$ as defined in \eqref{st}, we germify $K_0$ at $0$, $\mathcal{A}$, resp. $\mathcal{A}_0$ and $\mathcal{A}_1$, at the fiber $(s\times t)^{-1}(0)$ and germify consequently all the structure maps. We thus obtain the groupoids $(\mathcal A,(s\times t)^{-1}(0))\rightrightarrows (K_0,0)$, resp. $(\mathcal A_0,(s\times t)^{-1}(0))\rightrightarrows (K_0,0)$ and $(\mathcal A_1,(s\times t)^{-1}(0))\rightrightarrows (K_0,0)$.

Finally, we stackify $(\mathcal A,(s\times t)^{-1}(0))\rightrightarrows (K_0,0)$, resp. $(\mathcal A_0,(s\times t)^{-1}(0))\rightrightarrows (K_0,0)$ and $(\mathcal A_1,(s\times t)^{-1}(0))\rightrightarrows (K_0,0)$, over $\mathfrak{G}$. We denote the corresponding stacks by $(\mathscr{A},0)$, resp. $(\mathscr{A}_0,0)$ and $(\mathscr{A}_1,0)$.

The objects of $(\mathscr{A},0)$ over a germ of analytic space $(S,0)$ are germs of $M$-deformations $p : \mathcal X\to S$ with fiber at the point $0$ of $S$ isomorphic to $X_0$. We denote them by $(\mathcal X,p^{-1}(0))\to (S,0)$. The morphisms over some analytic mapping $f: S\to S'$ are germs of morphisms between $M$ deformations $(\mathcal X,p^{-1}(0))\to (S,0)$ and $(\mathcal X',{p'}^{-1}(0))\to (S',0')$ over $f$. Note that $f(0)=0'$.

\begin{remark}
	\label{rkimportant}
	It is crucial to notice that we deal with germs of {\slshape unmarked} deformations. There is obviously a distinguished point (since we deal with germs), but there is no marking of the distinguished fiber.
\end{remark}

The following theorem shows that $(\mathscr{A},0)$ contains indeed {\slshape all} such germs of $M$-deformations and of morphisms between $M$-deformations. It is folklore although we never saw a paper stating this in a precise way.

\begin{theorem}
	\label{thmuniversal}
	The stack $(\mathscr{A},0)$ is the stack over $\mathfrak G$ whose objects are the germs of $M$-deformations of $X_0$ and whose morphisms are the germs of morphisms between $M$-deformations.
\end{theorem}

\begin{proof}
	Since the site $\mathfrak{G}$ does not contain any non-trivial covering, there is no gluings of families, and the torsors associated to $(\mathscr{A},0)$ are just given by the pull-backs of the germ of Kuranishi family $(\mathscr{K}_0,\pi^{-1}(0))\to (K_0,0)$. Kuranishi's Theorem implies that the natural inclusion of $(\mathscr{A},0)$ in the stack of germs of $M$-deformations over $\mathfrak G$ is essentially surjective.
	
	Morphisms over the identity of some germ $(S,0)$ of analytic space are thus given by morphisms $F$ between two germs of families $(f^*\mathscr{K}_0,\pi^{-1}(0))\to (B,0)$ and $(g^*\mathscr{K}_0,\pi^{-1}(0))\to (B,0)$ for $f$ and $g$ germs of analytic mappings from $(B,0)$ to $(K_0,0)$. Hence $F$ restricted to the central fiber $X_0\simeq \pi^{-1}(0)$ is an automorphism of the central fiber that is an element of $\A$. But $(s\times t)^{-1}(0)$ is isomorphic to $\A$ so such a morphism $F$ is induced by an analytic mapping from $(B,0)$ to $(\mathcal A,(s\times t)^{-1}(0))$ that we still denote by $F$ which satisfies $s\circ F=f$ and $t\circ F=g$. This shows that the natural inclusion of $(\mathscr{A},0)$ in the stack of germs of $M$-deformations over $\mathfrak G$ is fully faithful. 
\end{proof}

This must be thought of as a property of universality. Indeed, the failure of universality in Kuranishi's theorem comes from the existence of automorphisms of the Kuranishi family fixing the central fiber but not all the fibers. Imposing a marking is an artificial and incomplete solution to this problem because this automorphism group is not in general isomorphic to $\A$ which is killed by the marking.

In the same way, we have 
\begin{corollary}
	\label{thmuniversal1}
	The stack $(\mathscr{A}_1,0)$ is the stack over $\mathfrak G$ whose objects are the germs of reduced $M$-deformations and whose morphisms are the germs of morphisms between reduced $M$-deformations.
\end{corollary}
Here $C^\infty$-markings of the $M$-families, that is the choice of a $C^\infty$ diffeomorphism from $M$ to the central fiber, can be used to characterize reduced families. Morphisms are required to induce on $M$ a diffeomorphism isotopic to the identity through the markings.

We also have

\begin{corollary}
	\label{thmuniversal0}
	The stack $(\mathscr{A}_0,0)$ is the stack over $\mathfrak G$ whose objects are the germs of $0$-reduced $M$-deformations and whose morphisms are the germs of morphisms between $0$-reduced $M$-deformations.
\end{corollary}

A $0$-reduced $M$-deformation is just a marked family. We use a different terminology because morphisms are different. A morphism of marked families is required to induce on $X_0$ the identity through the markings, whereas a morphism of $0$-reduced $M$-deformation is required to induce on $X_0$ an element of $\Azero$ through the markings.

\subsection{Kuranishi stack as an orbifold}
\label{Kurorbifold}
We now investigate when the Kuranishi stack(s) is (are) an orbifold. Here by orbifold, we mean the stack given by the global quotient of an analytic space by an holomorphic action of a finite group fixing exactly one point.
We have
\begin{theorem}
	\label{Kurstackorbifold}
	The Kuranishi stack $\mathscr{A}$ is an orbifold if and only if $\A$ is finite.
\end{theorem}
\begin{remark}
	\label{precisestatement}
	To be more precise, the statement "the Kuranishi stack $\mathscr{A}$ is an orbifold" must be understood as: "for a good choice of $ V$ and $K_0$, the Kuranishi stack $\mathscr{A}$ is an orbifold". This should appear clearly in the proof.
\end{remark}

\begin{proof}
	Since the isotropy group of $X_0$ is $\A$, the condition is obviously necessary. So let us assume that $\A$ is finite. We show that we may choose $ V$ and $K_0$ so that the corresponding atlas $\mathcal A\rightrightarrows K_0$ of $\mathscr{A}$ is Morita equivalent to the translation groupoid $\A\times K_0\rightrightarrows K_0$.
	
	We start with an arbitrary atlas $\mathcal A\rightrightarrows K_0$. We assume that the $\A$ version of \eqref{Gdecompoglobal} is valid. Given $f\in \A$, define $\text{Hol}_f$ as in \eqref{Autaction} and set
	\begin{equation}
		\label{sigmaf}
		\sigma_f : J\in U_f\subset K_0\longmapsto (J,f\circ e(\chi_J))\in\mathcal{A}
	\end{equation}
where $\chi$ is an analytic mapping from $U_f\subset K_0$ to $W$ with $\chi(0)=0$ defined by
\begin{equation}
	\label{chichecks}
	\Xi(J\cdot f)=(J\cdot f)\cdot e(\chi(J))
\end{equation}
	Observe that
\begin{equation}
	\label{extension}
	\text{Hol}_f(J)=f\circ e(\chi(J))
\end{equation}
The map $\sigma_f$ is a local analytic section of the source map $s : \mathcal A\to K_0$ defined on $U_f$. It satisfies
\begin{equation}
	\label{tsigma}
	t\circ\sigma=\text{Hol}_f
\end{equation}
The proof of Theorem \ref{Kurstackorbifold} consists in the following two lemmas.
	
	\begin{lemma}
		\label{lemma1}
		For all $f\in\A$, the map $\sigma_f : U_f\subset K_0\to \mathcal A$ is the unique (up to restriction) extension of $f$.
	\end{lemma}

	By extension of $f$, we mean a section $F$ of $s$ defined in a neighborhood of $0$ and such that $F(0)=f$.
	
	\begin{proof}[Proof of Lemma \ref{lemma1}]
	
		The map $\sigma_f$ is obviously an extension of $f$ as desired. 
		
		Let now $G$ be another extension of $f$. Then, for all $J\in K_0$ close to $0$, we have a decomposition
		\begin{equation}
			\label{extensionbis}
			G(J)=f\circ e(\eta(J))
		\end{equation}
		using \eqref{Gdecompoglobal}. Here the factor in $\A$ is constant equal to $f$ since $\A$ is discrete.
		
		The mapping $\eta$ also satisfies \eqref{chichecks}. But since \eqref{Kurmap} is an isomorphism, \eqref{chichecks} is uniquely verified and $\eta=\chi$. Thus $G=\sigma_f$ on a neighborhood of $J_0$ in $K_0$.		
	\end{proof}
	Redefine $K_0$ as the intersection of all $U_f$ for $f$ in the finite group $\A$. Then all extensions $\sigma_f$ are defined on $K_0$,
	Set 
	\begin{equation}
		\label{Ext}
		\mathscr{E}xt=\{\sigma_f : K_0\to \mathscr{A}\mid f\in\A\}
	\end{equation}
	We have
	\begin{lemma}
		\label{lemma2}
		$(\mathscr{E}xt,\circ)$ is a group isomorphic to $\A$.
	\end{lemma}

	\begin{proof}[Proof of Lemma \ref{lemma2}]
		Define 
		\begin{equation}
			\sigma_g\circ \sigma_f\ :\ J\in K_0\longmapsto \sigma_g(J\cdot \sigma_f(J))\circ \sigma_f(J)
		\end{equation} 
		This is an extension of $g\circ f$, and thus by Lemma \ref{lemma1} is equal to $\sigma_{g\circ f}$ on a neighborhood of $J_0$, hence on $K_0$ by analyticity.
		
		Still by Lemma \ref{lemma1}, if $g_1\circ\hdots\circ g_k=Id$, then the same relation holds for the $\sigma_{g_i}$'s.
	\end{proof}
	 The space $K_0$ is invariant by the group $(\mathscr{E}xt,\circ)$. We may thus take as atlas for $\mathscr{A}$ the translation groupoid $\mathscr{E}xt\times K_0\rightrightarrows K_0$, or, equivalently the translation groupoid $\A\times K_0\rightrightarrows K_0$.
\end{proof} 

Replacing $\mathscr{A}$ with $\mathscr{A}_1$, resp. $\mathscr{A}_0$ and $\A$ with $\Aun$, resp. $\Azero$ yields the following immediate corollaries.

\begin{corollary}
		\label{Kur1stackorbifold}
	The Kuranishi stack $\mathscr{A}_1$ is an orbifold if and only if the group $\Aun$ is finite.
\end{corollary}
and
\begin{corollary}
	\label{Kur0stackorbifold}
	The Kuranishi stack $\mathscr{A}_0$ is an orbifold if and only if $\Azero$ is zero.
\end{corollary}

\section{The neighborhood of a point in the Teichm\"uller stack}
\label{localTeich}
A neighborhood of $X_0$ in $\mathscr{T}(M)$ consists of $M$-deformations all of whose fibers are close to $X_0$, that is can be encoded by structures $J$ living in a neighborhood $V$ of $J_0$ in $\mathcal I(M)$. As in \cite{LMStacks}, we shall denote it by $\mathscr{T}(M, V)$. From now on, we assume that $V$ is open, connected and small enough to come equipped with a Kuranishi mapping \eqref{Kurmap}.

\subsection{Atlas}
\label{atlas}
The main difficulty to construct an atlas in \cite{LMStacks} was to describe all the morphisms between the different Kuranishi spaces involved. Here, in the local case, we just need to use one Kuranishi space and it is straightforward to give an atlas for $\mathscr{T}(M, V)$. Just consider
\begin{equation}
	\label{atlasneigh}
	\mathcal T_V:=\{(J,f)\in\text{Diff}^0(M,\mathscr K_0)\mid J\cdot f\in K_0\}
\end{equation}
with structure maps as in \eqref{st} and \eqref{multgroupoid}. This gives an atlas $\mathcal T_V\rightrightarrows K_0$ for $\mathscr{T}(M,V)$. Observe that it is very close to that of the Kuranishi stacks. Indeed the points are exactly the same than those of $\mathscr{A}_1$, but $\mathscr{A}_1$ has less morphisms, hence also less descent data and thus less objects. To understand how to pass from $\mathscr{A}_1$ to $\mathscr{T}(M,V)$, we need to understand and encode the "missing" morphisms. 

\subsection{Target Germification}
\label{proj}
To compare the local Teichm\"uller stack with the Kuranishi stacks, we need to germify $ V$ and consider only complex structures belonging to the germ of some point $J_0$ in $ V$. This process is different from the germification process of section \ref{universal} which was about germifying the base category and thus the base of $M$-deformations. Here we still want to consider $M$-deformations over any analytic bases, but need to germify the set of possible fibers. Hence we need a target germification process, as opposed to the source germification process used in Section \ref{universal}.

To do that, we look at sequences of stacks $\mathscr{T}(M, V_n)$ for $( V_n)$ an inclusion decreasing sequence of neighborhoods of a fixed point $J_0$ with $ V_0= V$. Corresponding to a nesting sequence 
\begin{equation}
	\label{nestingseq}
	\hdots\subset  V_n\subset\hdots\subset  V\subset\mathscr{I}
\end{equation}
we obtain the sequence
\begin{equation}
	\label{nestingsstack}
	\begin{tikzcd}
	\hdots \arrow[r,hook]&\mathscr{T}(M, V_n)\arrow[r,hook]&\hdots\arrow[r,hook]&\mathscr{T}(M, V)
	\end{tikzcd}
\end{equation}
As in the standard case of germ of topological spaces, we may endow the set of stacks $\mathscr{T}(M,V)$ for $V\subset\mathscr{I}$ with the following equivalence relation
\begin{equation}
	\label{eqrel}
	\mathscr{T}(M,V)\sim \mathscr{T}(M,W)\iff V\cap  V'=W\cap  V'
\end{equation}
for some neighborhood $ V'$ of $J_0$ in $\mathscr{I}$.
\vspace{5pt}\\
We call the resulting equivalence class of $\mathscr{T}(M)$ the {\slshape target germification} of $\mathscr{T}(M)$ at $J_0$ and denote it by $(\mathscr{T}(M),J_0)$.
\vspace{5pt}\\
To understand $(\mathscr{T}(M),J_0)$, we need to consider sequences of $M$-deformations over the same base $(\mathscr{X}_n\to B)$ such that $\mathscr{X}_n$ is an object of $\mathscr{T}(M, V_n)$ for some decreasing sequence \eqref{nestingseq}. We identify two such sequences $({\mathscr{X}'}_n\to B)$ and $(\mathscr{X}_n\to B)$ if the families ${\mathscr{X}'}_n\to B$ and $\mathscr{X}_n\to B$ are isomorphic for every large $n$. 
Here are some examples of such sequences
\begin{enumerate}[i)]
	\item Start with a $M$-deformation $\mathscr{X}\to \mathbb{D}$ over the disk with central fiber isomorphic to $X_0$ and thus can be encoded in $J_0$. Then consider the pull-back sequence $(\lambda_n^*\mathscr{X}\to \mathbb{D})$ where $(\lambda_n)$ is a sequence of homotheties with ratio decreasing from $1$ to $0$.
	\item Start with a fiber bundle $E\to B$ with fiber $X_0$ and structural group $\Aun$ and a $M$-deformation $\pi : \mathscr{X}\to B\times\mathbb{D}$ which coincides with the bundle $E$ over $B\times\{0\}$. Then pick up some sequence $(x_n)$ in the disk which converges to $0$. Then consider the sequence of families $(\pi^{-1}(B\times\{x_n\})\to B)$.
\end{enumerate}
Morphisms from $({\mathscr{X}'}_n\to B)$ to $(\mathscr{X}_n\to B)$ are sequences $(f_n)$ with $f_n$ a family morphism over $B$ from ${\mathscr{X}'}_n$ to $\mathscr{X}_n$ for every $n$. Once again, we identify tow such sequences $(f_n)$ and $(g_n)$ if there exists some integer $k$ such that $f_n=g_n$ for $n\geq k$.

\subsection{Sequences of isomorphic structures in the Kuranishi space}
\label{pointstarget}
Let $(f_n)$ be a morphism from $({\mathscr{X}'}_n\to B)$ to $(\mathscr{X}_n\to B)$, two sequences of $(\mathscr{T}(M),J_0)$, as explained above in Section \ref{proj}. Since $\mathcal{T}_{V_n}\rightrightarrows K_0$ is an atlas for $\mathscr{T}(M,V_n)$, each $f_n$ is obtained by gluing a cocycle of morphisms in $\mathcal{T}_{V_n}$ over an open cover of $V_n$. Such morphisms are local morphisms of the Kuranishi family. Hence the existence of a morphism $f_n$ acting non-trivially on the base is subject to the existence of two isomorphic distinct fibers of the Kuranishi family, that is to the existence of two distinct points in $K_0$ encoding the same complex manifold up to isomorphism. In the same way, the existence of sequences of morphisms $(f_n)$ acting non-trivially on the base is subject to the existence of sequences $(x_n)$ and $(y_n)$ of points in $K_0$ such that
\begin{enumerate}[i)]
	\item Both sequences $(x_n)$ and $(y_n)$ converge to the base point of $K_0$.
	\item For every $n$, the fibers of the Kuranishi family above $x_n$ and $y_n$ are isomorphic.
\end{enumerate}
In particular, there exists a sequence $(\phi_n)$ of $\D$ such that
\begin{equation}
	\label{phin}
	\forall n,\qquad x_n\cdot\phi_n=y_n
\end{equation}
Now, we are looking for {\slshape missing} morphisms in the Kuranishi stacks. In other words, we are looking for such sequences $(x_n)$ and $(y_n)$ with the additional property that $(x_n,\phi_n)$ does not belong to $\mathscr{A}_1$, that is $(x_n,\phi_n)$ is not $(V_n,\mathcal D_0)$-admissible.

From \eqref{phin} and the convergence of $(x_n)$ and $(y_n)$, we see that the sequence $(\phi_n)$ may exhibit three different types of behaviour.
\begin{enumerate}[i)]
	\item It converges uniformly on compact sets to a diffeomorphism $g$. This $g$ fixes the base point of $K_0$, hence is an automorphism of $X_0$ and $(\phi_n)$ is already encoded in the Kuranishi stacks.
	\item  It does not converge uniformly but the associated sequence of graphs converges in the cycle space $\mathscr{C}$.
	\item Neither the sequence of diffeomorphisms nor the sequence of graphs converge.
\end{enumerate}
Here $\mathscr{C}$ denotes the Barlet space of (relative) $n$-cycles of $\mathscr{K}^{red}_0\times\mathscr{K}^{red}_0$ for $\mathscr{K}^{red}_0$ the reduction of the Kuranishi family. Let also $\mathscr{C}_0$ be the Barlet space of $n$-cycles of $X_0\times X_0$. 

Each $\phi_n$ is an isomorphism between the fiber $\pi^{-1}(x_n)$ and the fiber $\pi^{-1}(y_n)$ of the Kuranishi family, hence defines an element $\gamma_n$ of $\mathscr{C}$. When these cycles converge, the limit belongs to $\mathscr{C}_0$. So we may reformulate the three previous cases as follows.
\begin{enumerate}[i)]
	\item The cycles $\gamma_n$ converge to the graph of an automorphism of $X_0$.
	\item The cycles $\gamma_n$ converge to a cycle in $\mathscr{C}_0$ which does not belong to the irreducible component(s) formed by $\Aun$.
	\item The cycles $\gamma_n$ do not converge.
\end{enumerate}

\section{Finiteness properties in the K\"ahler setting}
\label{Lieb}
In this section, we recall and apply a basic result on cycle spaces in the K\"ahler case, which is due to Lieberman \cite{Lieberman}. We state the relative version, which is adapted to our purposes.

\begin{proposition}
\label{Liebprop}
Let $\pi_i : \mathcal X_i\to B_i$ be smooth morphisms with compact K\"ahler fibers over reduced analytic spaces $B_i$ for $i=0,1$. Let $E\subset B_0\times B_1$ be a subset and let $\mathcal Z\to E$ be a family of relative cycles of $\mathcal X_0\times \mathcal X_1$. Assume that the projection of $E$ is included in a compact of $B_0\times B_1$. Assume also that any cycle $Z$ is the graph of a biholomorphism from some fiber $(X_0)_t$ onto some fiber $(X_1)_{t'}$. Then,
\begin{enumerate}
\item[i)] $\mathcal Z$ only meets a finite number of irreducible components of the cycle space of $\mathcal X_0\times \mathcal X_1$.
\item[ii)] Let $\mathcal C$ be such a component. Then $\mathcal C$ contains a Zariski open subset $\mathcal C_0$ all of whose members are graphs of a biholomorphism between a fiber of $\mathcal X_0$ and a fiber of $\mathcal X_1$.
\end{enumerate}
\end{proposition}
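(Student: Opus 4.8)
The plan is to derive both parts from Lieberman's compactness theorem for cycle spaces of K\"ahler families \cite{Lieberman}: if $\mathcal Y\to S$ is a K\"ahler morphism carrying a relative K\"ahler form $\eta$, then for every compact $K\subset S$ and every $v>0$ the set of cycles supported in the fibres over $K$ and of $\eta$-volume $\le v$ is a \emph{compact} subset of the relative cycle space. Granting this, part (i) reduces to producing a uniform volume bound for the family $\mathcal Z$, which I will extract from the fact that the $Z_{(t,t')}$ are graphs together with the relative compactness of $E$; part (ii) will be a semicontinuity analysis inside a single irreducible component, finished off by Zariski's main theorem.

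\emph{Part (i).} Since $\overline E$ is compact I would pick relatively compact open sets $B_i'\subset B_i$ containing the projections of $\overline E$, and relative K\"ahler forms $\omega_i$ on $\mathcal X_i|_{B_i'}$: such forms exist because being K\"ahler is an open condition on fibres and a convex combination, via a partition of unity on the base, of fibrewise K\"ahler forms is again fibrewise K\"ahler; over the relatively compact $B_i'$ one may moreover take $\omega_i$ closed, the morphism being K\"ahler there. Put $\omega=\mathrm{pr}_0^{\,*}\omega_0+\mathrm{pr}_1^{\,*}\omega_1$. Writing $Z_{(t,t')}$ as the graph of the biholomorphism $f\colon(X_0)_t\to(X_1)_{t'}$ and pulling back along the first projection gives
\[
\mathrm{vol}_\omega\big(Z_{(t,t')}\big)=\frac1{n!}\int_{(X_0)_t}\big(\omega_0+f^{*}\omega_1\big)^{n}=\frac1{n!}\big([\omega_0]_t+f^{*}[\omega_1]_{t'}\big)^{n},
\]
a purely cohomological quantity. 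Now each $[\omega_i]$ restricts to a flat section of $R^{2}(\pi_i)_*\mathbb R$, and, $\mathcal Z\to E$ being an analytic family, the fibre class $[Z_{(t,t')}]\in H^{2n}$ is locally constant; hence $(t,t')\mapsto\mathrm{vol}_\omega(Z_{(t,t')})$ is locally constant on $E$, and, $E$ being contained in a compact, this function is bounded, say by $v_0$. Lieberman's theorem then says the cycles of $(\mathcal X_0\times\mathcal X_1)|_{B_0'\times B_1'}$ lying over $\overline E$ and of $\omega$-volume $\le v_0$ form a compact set; it contains every member of $\mathcal Z$, and since the irreducible components of a complex space are locally finite it meets only finitely many of them.

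\emph{Part (ii).} Fix an irreducible component $\mathcal C$ met by $\mathcal Z$, say the graph $Z_{(t_0,t_0')}$ is the cycle $Z_{s_0}$ of the universal family $\psi\colon\mathcal Z_\mathcal C\to\mathcal C$ over some $s_0$, and let $\mathrm{pr}_i\colon\mathcal Z_\mathcal C\to\mathcal X_i$ be the two projections. I would cut out $\mathcal C_0$ as the intersection of finitely many Zariski-dense open subsets, each containing $s_0$ and obtained by semicontinuity: (a) the locus where $\mathcal Z_\mathcal C\to\mathcal X_i\to B_i$ contracts every $\psi$-fibre to a point — open because the fibre dimension of the proper map $\mathcal Z_\mathcal C\to\mathcal C\times B_i$ is upper semicontinuous, and valid at $s_0$ since $Z_{(t_0,t_0')}$ sits in a single fibre-product — so that over it $Z_s\subset(X_0)_{\tau_0(s)}\times(X_1)_{\tau_1(s)}$ with $\tau_i$ holomorphic; (b) the locus where $Z_s$ is reduced and irreducible (nonempty and open since $\mathcal C$ contains the reduced irreducible $Z_{s_0}$); (c) the locus where the restrictions $\mathrm{pr}_i|_{Z_s}\colon Z_s\to(X_i)_{\tau_i(s)}$ are finite (again semicontinuity of fibre dimension). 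On $\mathcal C_0$ the image of $\mathrm{pr}_0|_{Z_s}$ is an irreducible $n$-dimensional subvariety of the irreducible $n$-fold $(X_0)_{\tau_0(s)}$, hence equals it, so $\mathrm{pr}_0|_{Z_s}$ is finite and surjective onto a smooth — in particular normal — variety; its degree can be read off from the cycle $\mathrm{pr}_{0*}[Z_s]$, which is locally constant on $\mathcal C$ and equals $[(X_0)_{\tau_0(s)}]$ at $s_0$, so the degree is $1$ and Zariski's main theorem forces $\mathrm{pr}_0|_{Z_s}$ to be an isomorphism; symmetrically for $\mathrm{pr}_1$. Then $f_s:=(\mathrm{pr}_1|_{Z_s})\circ(\mathrm{pr}_0|_{Z_s})^{-1}$ is a biholomorphism $(X_0)_{\tau_0(s)}\to(X_1)_{\tau_1(s)}$ with graph $Z_s$, which is exactly the claim.

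The step I expect to be the real obstacle is the uniform volume bound in (i): one must genuinely use both that the cycles are graphs — so that their volume is cohomological, hence locally constant along an analytic family — and the relative compactness of $E$ — so that only boundedly many such constants occur; with merely a pointwise, non-analytic assignment of cycles, or with $E$ bounded but wildly disconnected, the mixed terms $\int\omega_0^{k}\wedge(f^{*}\omega_1)^{n-k}$ could be unbounded, since $f^{*}$ may act with infinite orbit on $H^{2}$. The remaining ingredients — Lieberman's compactness, local finiteness of irreducible components, and the semicontinuity/Zariski's main theorem bookkeeping of (ii) — are standard; the only other point worth care is that part (ii) must first establish that a generic member of $\mathcal C$ actually lives in a fibre-product, which is not built into the ambient cycle space.
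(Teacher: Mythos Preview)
Your proof of (ii) is more careful than the paper's --- you make the semicontinuity steps and the degree-one / Zariski's main theorem argument explicit, while the paper argues in a single paragraph that a generic cycle is a submanifold lying in a single fibre-product with bijective projections --- but the strategies coincide.

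The genuine gap is in (i), and you already put your finger on it in your closing paragraph. You assume $\mathcal Z\to E$ is an \emph{analytic} family so that $[Z_{(t,t')}]$ is locally constant on $E$, but in the statement $E$ is just a bare \emph{subset} of $B_0\times B_1$, and the assignment $(t,t')\mapsto Z_{(t,t')}$ is a pointwise choice of cycle. In the applications (Lemma~\ref{interlemma} and the proof of Theorem~\ref{main}) this is exactly what happens: $E$ is a set such as ``the points of $K_0$ lying in the $\mathrm{Diff}^0(M)$-orbit of $J_0$'', and the biholomorphisms $f_J$ are \emph{chosen} one at a time; there is no analytic family and no local constancy to invoke. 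So your mechanism for the volume bound is not available.

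The paper closes this gap by a different --- and also unstated in the Proposition --- hypothesis, namely that each biholomorphism $f_t$ lies in $\mathrm{Diff}^0(M)$, hence is smoothly isotopic to the identity. Then $f_t^{*}$ acts \emph{trivially} on $H^{2}(M;\mathbb R)$, so $f_t^{*}\omega^1_{t'}-\omega^1_{t'}$ is exact and
\[
\mathrm{vol}\big(Z_{(t,t')}\big)=\int_M\big(\omega^0_t+f_t^{*}\omega^1_{t'}\big)^n=\int_M\big(\omega^0_t+\omega^1_{t'}\big)^n,
\]
which is a continuous function of $(t,t')$ alone, hence bounded on the compact containing $E$. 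This kills precisely the mixed terms $\int\omega_0^{k}\wedge(f^{*}\omega_1)^{n-k}$ you were worried about --- not by an analytic-family assumption, but because the Teichm\"uller setting forces $f^{*}$ to act trivially on cohomology, ruling out the ``infinite orbit on $H^2$'' scenario you correctly identified as the danger.
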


\begin{proof}
i) Let $(\omega^i_t)_{t\in B_i}$ be a continuous family of K\"ahler forms on the $\pi_i$-fibers ($i=0,1$). Let $M$ be the smooth model of $X_0$ and let $(J^i_t)_{t\in B_i}$ be a continuous family of integrable almost complex operators on $M$ such that $(X_i)_t=(M,J^i_t)$. For every $e\in E$, call $f_e: M\to M$ the biholomorphism from some fiber $(X_0)_t$ onto some fiber $(X_1)_{t'}$ corresponding to the cycle $Z_e$. We compute the volume of these cycles using the $\omega_t$. We have
$$
\text{Vol}(Z_e)=\int_M \left (\omega^0_t+f_e^*\omega^1_{t'}\right )^n=\int_M\left (\omega^0_t+\omega^1_{t'}\right )^n
$$
since $f_e$ is isotopic to the identity hence $f_e^*\omega^1_{t'}$ and $\omega^1_{t'}$ differs from an exact form. Since the projection of $E$ is included in a compact of $B_0\times B_1$, we obtain that the volume of the $Z_e$ is uniformly bounded. It follows from \cite[Theorem 1]{Lieberman} that $\mathcal Z$ has compact closure in the cycle space of $\mathcal X_0\times\mathcal X_1$. Hence $\mathcal Z$ only meets a finite number of irreducible components of this cycle space.\vspace{3pt}\\
ii) Consider the family of cycles $\tilde{\mathcal C}\subset \mathcal X_0\times\mathcal X_1\to \mathcal C$. Since this map is proper and surjective, it is smooth on a Zariski open subset. Since some fibers are non singular, the generic fiber is non singular. The cycles above $E$ are submanifolds of some $(X_0)_t\times (X_1)_{t'}$ with projections $pr_i$ being bijective onto both factors. Hence, on a Zariski open subset of $\mathcal C$, every cycle enjoys such properties. So is the graph of a biholomorphism between a fiber of $\mathcal X_0$ and a fiber of $\mathcal X_1$.

\end{proof}
Setting $X_0=(M,J_0)$, considering the orbit $\mathcal O$ of $J_0$ in $\mathcal I(M)$ and viewing $K_0$ as a local transverse section, we obtain a first interesting Corollary. 

\begin{corollary}
	\label{interlemma}
	If $K_0$ is small enough then $K_0$  intersects $\mathcal O$ only at $J_0$. 
\end{corollary}

\begin{proof}
	We assume that $K_0$ is small enough so that it only contains K\"ahler points. We also assume that $K_0$ is reduced, replacing it with its reduction if necessary. Take $\X_1=\mathscr{K}_0$ and $\X_0=X_0$ seen as a family over the point $\{J_0\}$. Let $E'$ be the subset of $K_0$ corresponding to complex structures $J$ in the same $\D$-orbit that $J_0$. Now, $E'$ is a subset of $K_0$ which does not contain any continuous path by Fischer-Grauert Theorem, see \cite[Lemma 5]{ENS} for the convenient geometric reformulation. So it contains at most a countable number of points. Since we are only interested in what happens close to $J_0$, we may replace $E'$ with its intersection with a compact neighborhood of $J_0$ in $K_0$. Then for each $J\in E'$, choose some element $f_J$ of $\D$ mapping $J_0$ onto it. Set $E=\{J_0\}\times E'$ and let $\mathcal Z$ be the cycles corresponding to the graphs of the $f_J$. Apply Proposition \ref{Liebprop}. We conclude that $\mathcal Z$ meets a finite number of irreducible components of the cycle space of $X_0\times\mathscr{K}_0$, say $\mathcal C_1$,..., $\mathcal C_p$.
	
	 Still by Proposition \ref{Liebprop}, it follows that a Zariski open subset of each $\mathcal C_i$ only contains graph of biholomorphisms between $X_0$ and some $X_J$ with $J\in E'$. Hence each of these components only contains cycles in a fixed product $X_0\times X_J$ and $E'$ is a finite subset. Reducing $K_0$ if necessary, we may assume that $E'$ is just $\{J_0\}$ as wanted.
\end{proof}

Let $\Gamma$ be the union  of the irreducible components of $\mathscr{C}$ containing a sequence \eqref{phin}. Let $\Gamma_0$ be the intersection of $\Gamma$ with $\mathscr{C}_0$. Observe that every cycle in $\Gamma_0$ projects onto each factor of $X_0\times X_0$ through a map of degree one. In the same way, every cycle in $\Gamma$ projects onto each factor of some $X_t\times X_{t'}$ through a map of degree one. We may state our second Corollary

\begin{corollary}
	\label{2ndcor}
	Assume $X_0$ is K\"ahler. Then,
	\begin{enumerate}[\rm i)]
		\item Each irreducible component of $\Gamma$ contains at least one cycle of $X_0\times X_0$.
		\item $\Gamma$ has a finite number of components.
		\item Take an irreducible component $\mathcal{C}$ of $\Gamma$. Every connected component of the intersection $\Gamma_0\cap \mathcal C$ is either the closure of connected components of $\Aun$ or a component all of whose members are singular cycles.
	\end{enumerate}
\end{corollary}

\begin{definition}
	\label{defex}
	An exceptional component of $\Gamma_0$ is a connected component of some $\Gamma_0\cap \mathcal C$, all of whose members are singular cycles. In this case, we also say that $\mathcal C$ is exceptional above $J_0$.
\end{definition}
Exceptional components may be isolated singular cycles or components of positive dimension with a reducible generic member. 

Notice that the connected components of an intersection $\Gamma_0\cap \mathcal C$ may not be connected components of $\Gamma_0$. Two distinct irreducible components of $\Gamma$ 
may intersect in $\Gamma_0$, for example in the singular part of the closure of a connected component of $\Aun$.

\begin{proof}
	As above in the proof of Corollary \ref{interlemma}, assume that $K_0$ is reduced and all the fibers of the Kuranishi family are Kähler. We may thus apply Proposition \ref{Liebprop} to $\mathcal X_0=\mathcal X_1=\mathscr{K}_0$ and to $\Gamma$. This proves that $\Gamma$ has a finite number of components. It also proves that every sequence of $\Gamma$ converges up to passing to a subsequence, and so case iii) of the list at the end of Section \ref{localTeich} is not possible. 

	 Moreover, if a component of some $\Gamma_0\cap \mathcal C$ contains a (irreducible) manifold, this has to be the graph of an automorphism, since it projects with degree one onto each factor of $X_0\times X_0$. Thus it must contain all the extensions of this automorphisms, hence a connected component of $\Aun$. By Proposition \ref{Liebprop} applied to $\mathcal X_0=\mathcal X_1=X_0$ and to $\Gamma_0$, this is a component of the closure of $\Aun$. Now such a component may connect several components of $\Aun$.
\end{proof}

More precisely, let $\text{Irr }\Gamma$ be the set of irreducible components of $\Gamma$; that is, each point of $\text{Irr }\Gamma$, encodes an irreducible component of $\Gamma$.

\begin{corollary}
	\label{3rdcor}
	Assume $X_0$ is K\"ahler. Then,
	\begin{enumerate}[\rm i)]
		\item The number of irreducible components of the reduction of $\mathcal T_V$ is finite.
		\item If $V$ is a sufficiently small neighborhood of $J_0$ in $\I$, then there exists a natural bijection between the set of irreducible components of the reduction of $\mathcal T_V$ and $\text{Irr }\Gamma$.
		\item If $V$ is a sufficiently small neighborhood of $J_0$ in $\I$, and $V'\subset V$ contains also $J_0$, then the natural inclusion of $\mathcal T_{V'}$ in $\mathcal T_V$ is a bijection between the corresponding sets of irreducible components.
	\end{enumerate}
\end{corollary}

\begin{proof}
	Assume $\mathcal T_V$ is reduced. Every irreducible component of $\mathcal T_V$ injects in an irreducible component of $\mathscr C$. Just send $(J,f)$ to its graph as a cycle of $\mathscr K_0\times\mathscr K_0$. Since $\mathscr C$ has only a finite number of components by K\"ahlerianity, this proves i). Moreover, by Proposition \ref{Liebprop}, a component of $\mathcal T_V$ forms a Zariski open subset of the corresponding component of $\mathscr C$. Then, using Corollary \ref{2ndcor}, for a sufficiently small neighborhood $V$ of $J_0$, the point $J_0$ is adherent to the $s$-image of every such component. Hence they contain a sequence \eqref{phin}. So the irreducible components of  $\mathcal T_V$ are in fact in 1:1 correspondence with the irreducible components of $\Gamma$, proving ii). Then iii) follows from ii).
\end{proof}
As a consequence of Corollary \ref{3rdcor}, we do not need to consider the full target germification in the K\"ahler case. It is enough to look at $\mathscr{T}(M,V)$ for a fixed small enough $V$ since restricting to smaller neighborhoods of $0$ will not change the number of components of $\mathcal{T}_V$.

In other words, there is no wandering sequence \eqref{phin} with each $\phi_n$ belonging to a different component of $\mathcal T_V$.

\section{Local Structure of the Teichm\"uller Stack in the K\"ahler setting}
\label{LocalTeichI}
We want to analyse the structure of the analytic space $\mathcal T_V$ defined in \eqref{atlasneigh} and compare it with $\mathscr{A}_1$ in the K\"ahler setting. 

We already observed in Section \ref{localTeich} that there is a natural inclusion of groupoids of $\mathcal{A}_1$ into $\mathcal T_V$. It comes from the fact that $\mathcal T_V$ encodes every morphism between fibers of the Kuranishi family, whereas $\mathcal{A}_1$ encodes {\slshape some} morphisms between fibers of the Kuranishi family. This inclusion is just the description at the level of atlases of the natural inclusion of $\mathscr{A}_1$ into $\mathscr{T}(M,V)$: $\mathscr{A}_1$-objects, resp. $\mathscr{A}_1$-morphisms, inject in $\mathscr{T}(M,V)$-objects, resp. $\mathscr{T}(M,V)$-morphisms. So our final goal here is to give the structure of this inclusion.

There exists also a natural inclusion of $\mathscr{A}_0$ into $\mathscr{T}(M,V)$. We first relate the morphisms encoded in $\mathcal T_V$ to those encoded in $\mathcal{A}_0$.

\begin{lemma}
	\label{lemma1main}
	Let $(J,f)\in \mathcal T_V$ and let $(J,g)\in \mathcal T_V$. Then these two elements belong to the same connected component of $\mathcal T_V$ if and only if $(J\cdot f,f^{-1}\circ g)$ belongs to $\mathcal{A}_0$.
\end{lemma}

\begin{proof}
	Assume $(J\cdot f,f^{-1}\circ g)$ belongs to $\mathcal{A}_0$, that is $(J\cdot f,f^{-1}\circ g)$ is $(V, \mathcal{D}_0)$-admissible. Then we may decompose it as
	\begin{equation*}
		(J_1,f^{-1}\circ g)=(J_1,h_1)\circ (J_2,h_2)\circ\hdots\circ (J_k,h_k)
	\end{equation*}
where $J_1:=J\cdot f$. Now each $h_i\in \mathcal{D}_0$ can be decomposed as $k_i\circ e(\chi_i)$ through \eqref{Gdecompoglobal} and is thus isotopic to the identity inside $\mathcal{D}_0$. Hence $(J_1,Id)$ and $(J_1,h_1)$
stay in the same connected component of $\mathcal{T}_V$, and so do $(J,f)$ and $(J,f\circ h_1)$.

Conversely, let $(J,f)$ and $(J,g)$ belong to the same connected component of $\mathcal{T}_V$. Then, there exists an isotopy $(J_t,f_t)$ joining these two points in $\mathcal{T}_V$. But then we may find $0<t_1<\hdots <t_k<1$ such that
\begin{equation*}
	(J_1,h_1):=(J\cdot f,f^{-1}\circ f_{t_1})
\end{equation*}
satisfies $h_1\in U$, as well as
\begin{equation*}
	(J_2,h_2):=(J_1\cdot h_1,f_{t_1}^{-1}\circ f_{t_2})
\end{equation*}
satisfies $h_2\in U$ and so on. 
\end{proof}

We are now in position to state and prove our first main result.

\begin{theorem}
	\label{finitethm}
	The natural inclusion of $\mathscr{A}_0$ into $\mathscr{T}(M,V)$, resp. of $\mathscr{A}_1$ into $\mathscr{T}(M,V)$, is a finite \'etale morphism of analytic stacks.
\end{theorem}

By finite \'etale morphism of analytic stacks, we mean that, given any $B\in \mathfrak{S}$ and any morphism $u$ from $B$ to $\mathscr{T}(M,V)$, the fiber product
\begin{equation}
	\label{fb}
	\begin{tikzcd}
		B\times_{u}\mathscr A_0 \arrow[d,"f_1"']\arrow[r,"f_2"]\arrow[dr,phantom,"\scriptstyle{\square}",very near start, shift right=0.5ex]&\mathscr{A}_0\arrow[d,"\text{inclusion}"]\\
		B\arrow[r,"u"]&\mathscr{T}(M,V)
	\end{tikzcd}
,\quad\text{resp.}
\begin{tikzcd}
	B\times_{u}\mathscr A_1 \arrow[d,"f_1"']\arrow[r,"f_2"]\arrow[dr,phantom,"\scriptstyle{\square}",very near start, shift right=0.5ex]&\mathscr{A}_1\arrow[d,"\text{inclusion}"]\\
	B\arrow[r,"u"]&\mathscr{T}(M,V)
\end{tikzcd}
\end{equation}
satisfies
\begin{enumerate}[i)]
	\item $B\times_{u}\mathscr A_0$, resp. $B\times_{u}\mathscr A_1$, is a $\C$-analytic space.
	\item The morphism $f_1$ is a finite \'etale morphism between  $\C$-analytic spaces.
\end{enumerate}

\begin{remark}
	\label{important}
	We emphasize that in the definition of analytic stack used in \cite{LMStacks}, we did not impose that the diagonal is representable, see the discussion in \S 2.4 of \cite{LMStacks}. As a consequence, point (i) above does not follow from the fact that $\mathscr{T}(M,V)$ is an analytic stack but shall be proved by hands.
\end{remark}

\begin{proof}
	By Yoneda's lemma, a morphism $u : B\to\mathscr{T}(M,V)$ corresponds to a family $\mathcal X\to B$. The fiber product $B\times_{u}\mathscr A_0$ encodes the isomorphisms of $\mathscr{T}(M,V)$ 
	\begin{equation}
		\begin{tikzcd}[column sep=small]
			\label{objects}
			\mathcal X \arrow[rr,"\alpha"] \arrow[rd]&&\mathcal X'\arrow[ld]\\
			&B&
		\end{tikzcd}
	\end{equation}
between families $\mathcal X\to \mathcal X'$ over $B$ 
modulo isomorphisms $\beta$ over $B$
 \begin{equation}
 	\begin{tikzcd}[row sep=small]
 		\label{morphisms}
 		&\mathcal X'\arrow[dd, "\beta"]\\
 		\mathcal X \arrow[ur,"\alpha"] \arrow[rd, "\alpha'"']&\\
 		&\mathcal X''
 	\end{tikzcd}
 \end{equation}
belonging to $\mathscr{A}_0$.
	
	Decompose $B$ as a union of connected open sets $B_1\cup\hdots\cup B_k$ in such a way that the family $\mathcal X$, resp. $\mathcal X'$, 
	is locally isomorphic above $B_i$ to $u_i^*\mathscr{K}_0$ for some $u_i : B_i\to K_0$, resp. to $(u'_i)^*\mathscr{K}_0$ for some $u'_i : B_i\to K_0$. These local models are glued through a cocyle $u_{ij}:B_i\cap B_j\to \mathcal T_V$, resp. $u'_{ij}:B_i\cap B_j\to \mathcal A_0$, 
	satisfying $s(u_{ij})=u_i$ and $t(u_{ij})=u_j$, resp. $s(u'_{ij})=u'_i$ and $t(u'_{ij})=u'_j$, to obtain a family isomorphic to $\mathcal X$, resp. $\mathcal X'$.
	
	In these models, up to passing to a finer covering, an isomorphism \eqref{objects} corresponds to a collection $F_i:B_i\to \mathcal T_V$ fulfilling
	\begin{enumerate}[i)]
		\item $s\circ F_i=u_i$ and $t\circ F_i=u'_i$
		\item $F_i\circ u_{ij}=u'_{ij}\circ F_j$
	\end{enumerate} 
Then $\mathcal X''$ corresponds to a cocycle $u_i'' : B_i\to K_0$ and $\alpha'$ to a collection $F'_i:B_i\to\mathcal T_V$ satisfying similar relations.\\
Denoting by $\alpha_i$ the isomorphism between $u_i^*\mathscr{K}_0$ and $(u'_i)^*\mathscr{K}_0$, and setting
\begin{equation}
	\label{Fi}
	F_i(b)=(u_i(b),F_{i,b})\qquad\text{ and }\qquad F'_i(b)=(u_i(b),F'_{i,b})
\end{equation}
we have
\begin{equation}
	\label{alpha}
	\alpha_i(b,v)=(u_i(b),F_{i,b}(v))\qquad\text{ and }\qquad \alpha'_i(b,v)=(u_i(b),F'_{i,b}(v))
\end{equation}
and
\begin{equation}
	\label{uupus}
	u_i(b)\cdot F_{i,b}=u'_i(b)\qquad\text{ and }\qquad 
	u_i(b)\cdot F'_{i,b}=u''_i(b)
\end{equation}

Let $\beta$ be $\alpha'\circ\alpha^{-1}$. This is a morphism of $\mathscr{T}(M,V)$ which is given in our localisation by 
\begin{equation}
	\label{beta}
	\beta_i(b,v)=(u'_i(b),G_{i,b}(v):=F_{i,b}^{-1}\circ F'_{i,b}(v))
\end{equation}
We want to know when $\beta$ is a morphism of $\mathcal{A}_0$. 

Since the $B_i$ are connected, the image of each map $F_i$, $F'_i$ and $G_i$, is included in a single connected component of $\mathcal T_V$. By Lemma \ref{lemma1main}, $F_i$ and $F'_i$ land in the same connected component of $\mathcal T_V$ if and only if $G_i$ lands in $\mathcal{A}_0$. 

Choose a point $b_i$ in each $B_i$. Then $\alpha$ and $\alpha'$ are equivalent through \eqref{morphisms} if and only if $(b_i,F_i(b_i))$ and $(b_i,F'_i(b_i))$ belong to the same connected component of $\mathcal T_V$ for all i. 

Now, assume that $(b_1,F_1(b_1))$ and $(b_1,F'_1(b_1))$ belong to the same connected component of $\mathcal T_V$. Given $i\not = 1$ and taking $c\in B_1\cap B_i$, it follows from the compatibility relations that 
\begin{equation}
	\label{compatibility}
	F_i(c)=u'_{i1}\circ F_1\circ u_{1i}(c)\qquad\text{ and }\qquad F'_i(c)=u''_{i1}\circ F'_1\circ u_{1i}(c)
\end{equation} 
But $u'_{i1}$ and $u''_{i1}$ are mappings with values in $\mathcal A_0$, hence $(c,F_i(c))$ and $(c,F'_i(c))$ belong to the same connected component of $\mathcal T_V$. And so do $(b_i,F_i(b_i))$ and $(b_i,F'_i(b_i))$.

Therefore, the fiber product $B\times_u\mathscr{A}_0$ identifies with the disjoint union of $g$ copies of $B$, where $g$ is at most the number of connected components of $\mathcal T_V$, through the map
\begin{equation}
	\label{RepFP}
	(b,(u(b),F_b))\in (B\times_{u}\mathscr A_0)_{\{b\}}\longmapsto (b,\sharp (u(b),F_b))\in B\times\sharp\mathcal T_V
\end{equation}
Here $(B\times_{u}\mathscr A_0)_{\{b\}}$ denotes the set of objects above $\{b\}$, the set $\sharp\mathcal T_V$ is the set of connected components of $\mathcal T_V$ and the $\sharp$ application maps an element of $\mathcal T_V$ to the connected component of $\mathcal T_V$ which contains it.

Finally $f_1$ can be rewritten as the natural projection map
\begin{equation}
	\label{finite1}
	B\times \sharp_g\mathcal T_V\longrightarrow B
\end{equation}
for $\sharp_g\mathcal T_V$ the connected components of $\mathcal T_V$ that can be attained through \eqref{RepFP}. This proves that the inclusion of $\mathscr{A}_0$ in $\mathscr{T}(M,V)$ is a finite \'etale morphism.

Recall that any element $f$ of $\Aun$ admits a natural extension $\sigma_f$ - see \eqref{sigmaf} - in $\mathcal T_V$ whose $s$- and $t$-projections cover a neighborhood of $J_0$ in $K_0$. Recall also that the $s$-projection of any component is at least adherent to $J_0$. Hence, given an element $(J,g)$ of some connected component $\mathcal C$ of $\mathcal T_V$ and an element $f$ of some connected component $A'$ of $\Aun$, we may compose $(J,g)$ with $\sigma_f(J\cdot g)$ as soon as $J$ is sufficiently close to $J_0$. Moreover this composition lands in some component $\mathcal C'$ of $\mathcal T_V$ which depends only on $\mathcal C$ and on $A'$. Hence there is an action of $\sharp\Aun$ onto $\sharp\mathcal T_V$. \\
One eventually finds that the fiber product $B\times_u\mathscr{A}_1$ identifies with the disjoint union of a finite number of copies of $B$ and $f_1$ with the natural projection map
\begin{equation}
	\label{finite2}
	B\times \sharp_g\mathcal T_V/(\sharp\Aun\cap\sharp_g\mathcal T_V)\longrightarrow B
\end{equation}
\end{proof}
So, going back to the setting "Kur/Teich", we obtain that, in the K\"ahler case, there is a finite \'etale morphism from Kur to Teich. Our next step is to characterize the Kur=Teich case. 

The generic fiber of this morphism is $\sharp\mathcal T_V/\sharp\Aun$, hence we obtain Kur=Teich if and only these two finite groups have same cardinal. 
Now, this occurs if and only if, for some $\mathcal C$ in $\text{Irr }\Gamma$, the intersection $\Gamma_0\cap\mathcal C$ contains a component with only singular cycles. 

This motivates the following definition.

\begin{definition}
	\label{defexceptional}
	A point $X_0$ of $\mathscr{T}(M)$ is called {\slshape exceptional} if $\Gamma_0$ has at least one exceptional component in the sense of Definition \ref{defex}.
\end{definition}

By the mere definition, $X_0$ satisfies Kur=Teich if and only if it is not an exceptional point of the Teichm\"uller stack. The atlas \eqref{atlasneigh} being an atlas of a neighborhood of $X_0$ in the Teichm\"uller stack, it contains all the information we need to decide which points close to $X_0$ are exceptional.  

Indeed, pick a point $X_J$ in $K_0$. Assume it is exceptional. If $V$ is sufficiently small, every irreducible component of $\mathscr{C}$ contains a cycle of $X_0\times X_0$, hence the set $\Gamma_J$ of cycles of $X_J\times X_J$ which are limits of cycles of $\mathscr{C}$ is included in $\Gamma$. As a consequence, $J$ is exceptional if and only if there are only singular cycles in a component of $\Gamma$ above $J$. Let $S$ be the analytic set of singular cycles of $\mathscr{C}$. Let $p$ denote the projection of $\Gamma$ to $K_0\times K_0$. This is a proper map, since we are in the K\"ahler setting. The set of exceptional points is equal to
\begin{equation}
	\label{EC}
		E=\bigcup_{\mathcal{C}\in\text{Irr }\Gamma}\{J\in K_0\mid p^{-1}(J,J)\cap\mathcal{C}\not =\emptyset
		\text{ and }(p^{-1}(J,J)\cap\mathcal{C})_0\subset S\}
\end{equation}
where $_0$ means that some connected component of $p^{-1}(J,J)\cap\mathcal{C}$ is included in $S$. Let $E^c$ be the closure of \eqref{EC}.

We claim that $E^c$ is an analytic subspace of $K_0$. To see that, we first embed $E$ in $K_0\times K_0$ through the diagonal embedding of $K_0$. We still call $E$ the image. Let $\mathcal C$ be a component of $\Gamma$ and let $(J_0,J_0)$ be a point of $E$ belonging to the $\mathcal C$-component of \eqref{EC}. Let $W$ be an open neighborhood of $(J_0,J_0)$ in $K_0\times K_0$. We decompose $p^{-1}(J,J)\cap\mathcal{C}$ into connected components $(p^{-1}(J,J)\cap\mathcal{C})_i$ for $i$ between $0$ and $k$. Let $A_i$ be open neighborhoods of $(p^{-1}(J,J)\cap\mathcal{C})_i$ in $\mathcal C$ such that the union of all $A_i$ is exactly $p^{-1}(W)$. We assume that $(J_0,J_0)$ is generic in the sense that the intersection $A_i\cap p^{-1}(J,J)$ is still connected for $(J,J)\in W\cap E$. Notice that the restriction of $p$ to some $A_i$ is still a proper map. We may decompose $E\cap W$ as follows.
\begin{equation}
	\label{constructibleset}
	\begin{aligned}
		E\cap W&=
		\bigcup_{0\leq i\leq k}\{
		(J,J)\in W\mid p^{-1}(J,J)\cap A_i\not =\emptyset\\
		&\qquad\qquad\qquad\text{ and }p^{-1}(J,J)\cap A_i\subset S
		\}\\
		&=\bigcup_{0\leq i\leq k} \Delta\cap (W\setminus p(A_i\setminus S))
	\end{aligned}
\end{equation}
where $\Delta$ is the diagonal of $K_0\times K_0$.
This is a constructible set of $\Delta\cap W$. Hence its closure is an analytic set of $\Delta\cap W$. But its closure is just $E^c\cap W$. So we obtain a chart of analytic subspace for $E^c$ at every generic point. Assume now that $(J_0,J_0)$ is not generic. Then we may perform exactly the same construction but the resulting constructible set \eqref{constructibleset} may forget some exceptional points. Now, let $a$ be a positive integer. The set of points $(J,J)$ of $W$ where $p^{-1}(J,J)\cap A_i$ has exactly $a$ connected components is constructible, see \cite{Stacksproject}, Lemma 37.26.6 in an algebraic context. So is its pull-back by $p$ in $A_i$. Looking at its connected components, we may decompose $A_i$ into a finite set of constructible sets $A_{ij}$ on which $p$ has connected fibers. Then we obtain the correct decomposition
\begin{equation}
	\label{constructibleset2}
		E\cap W=\bigcup_{i,j} \Delta\cap (W\setminus p(A_{ij}\setminus S))
\end{equation}
making of $E\cap W$ a constructible set and of $E^c\cap W$ an analytic one.\\
We claim that $E^c$ is a {\slshape strict} analytic subspace of $K_0$\footnote{As above in Section \ref{Lieb}, we replace $K_0$ with its reduction if necessary, so strict means that $E^c$ is not the whole $K_0$.}. Assume the contrary. Then every cycle of a component $\mathcal C$ above a Zariski open set of the diagonal in $K_0\times K_0$ is singular. As we already argued several times, each component $\mathcal C$ contains a Zariski open subset of graphs of biholomorphisms between fibers of the Kuranishi family. Hence, $p(\mathcal C)$ is an analytic set of $K_0\times K_0$ strictly containing the diagonal; and a Zariski open subset of it encodes graphs of biholomorphisms. We may thus find for a well chosen $J$ close to $0$ in $K_0$, a path of biholomorphisms between $X_J$ and some $X_{J'_t}$ with $J'_t$ distinct from $J$. Hence $K_0$ has a non trivial foliated structure in the sense of \cite{ENS}. But this implies that the dimension of $\Azero$ jumps at $0$, that is is not constant in a neighborhood of $0$ in $K_0$. Since $\mathscr{K}_0\to K_0$ is complete at every point $J$ of $K_0$, denoting its Kuranishi space $K_J$, then the set of exceptional points in $K_J$ is also the full $K_J$. Hence the same argument tells that the dimension of the automorphism group also jumps at $X_J$. But it cannot jump at every point of $K_0$. Contradiction. The set $E^c$ is a strict analytic subspace of $K_0$.

So we may define a strict analytic substack of $\mathscr T(M,V)$ as the stackification of the full subgroupoid of $\mathcal T_V\rightrightarrows K_0$ above $E^c\subset K_0$. Since the notion of exceptional point is an intrinsic notion, this substack is just a neighborhood of $X_0$ of an analytic substack of $\mathscr{T}(M,V_K)$. Here $V_K$ is the open\footnote{By a classical result of Kodaira, K\"ahlerianity is a stable property through small deformations.} set of K\"ahler points of $\I$. So we have proved our second main Theorem
\begin{theorem}
	\label{2ndmainthm}
	The closure of exceptional points of the Teichm\"uller stack $\mathscr{T}(M,V_K)$ of K\"ahlerian structures form a strict analytic substack $\mathscr{E}(M)$ of $\mathscr{T}(M, V_K)$. 
\end{theorem}
and its immediate Corollary
\begin{corollary}
	\label{Cor2ndmain}
	A compact complex K\"ahler manifold $X$ satisfies Kur=Teich as well as the structures sufficiently close to it if and only if it belongs to $\mathscr{T}(M, V_K)\setminus \mathscr{E}(M)$.
\end{corollary}
	 Hence K\"ahler points such that Kur=Teich as well as the structures sufficiently close to it fill a Zariski open substack of the Teichm\"uller stack.
	 
\begin{remark}
	\label{Isubspace}
	The closure of exceptional points also form a strict analytic subspace of $V_K$.
\end{remark}

\begin{remark}
	\label{topgenericity}
	In particular, the set of points satisfying Kur=Teich is dense in the K\"ahlerian Teichm\"uller space $V_K/\D$ and contains an open set. However, due to the non-Hausdorff topology this space may have, this may be a misleading statement. For example, if $M$ is $\mathbb S^2\times\mathbb S^2$, then the (K\"ahlerian) Teichm\"uller space of $M$, as a set, is $\mathbb Z$, a point $a\in\mathbb Z$ encoding the Hirzebruch surface $\mathbb F_{2a}$\footnote{The surfaces $\mathbb F_{2a}$ and $\mathbb F_{-2a}$ are isomorphic, but not through a biholomorphism isotopic to the identity.}. Now, the topology to put on $\mathbb Z$ has for (non trivial) open sets $\{0\}$, $\{0,1\}$, $\{-1,0\}$ and so on, cf. \cite{LMStacks}, Examples 5.14 and 12.6. Hence $0$ is an open and dense subset of the Teichm\"uller space.
\end{remark}

\begin{remark}
	\label{exexceptional}
	If the intersection of an exceptional component of $X_0$ and $\Gamma_J$ is non-empty but contains regular cycles, then the corresponding morphisms 
	\begin{enumerate}[i)]
		\item either form a component of $\text{Aut}^1(X_J)$ which is not induced by $\Aun$.
		\item or send $J$ to points that are distinct from $J$ and not adherent to it. Hence, when restricting to a sufficiently small neighborhood $W$ of $X_J$, this component disappears from $\mathscr{T}(M,W)$.
	\end{enumerate}
\end{remark}

Finally, we deal with the orbifold case.

\begin{theorem}
	\label{mainorbifoldthm}
	Assume $X_0$ K\"ahler. Then, the following statements are equivalent
	\begin{enumerate}[\rm i)]
		\item $\mathscr{T}(M,V)$ is an orbifold.
		\item $\mathscr{A}_1$ is an orbifold and $X_0$ is not exceptional.
		\item $\Aun$ is finite and $X_0$ is not exceptional.
		\item $\Azero$ is trivial and $X_0$ is not exceptional.
	\end{enumerate}
\end{theorem}

Remark \ref{precisestatement} applies also here.

\begin{proof}
	By Theorem \ref{finitethm}, $\mathscr{T}(M,V)$ has finite isotropy groups if and only if $\mathscr{A}_1$ has finite isotropy groups. Now if $X_0$ is exceptional the finite group $\sharp\mathcal T_V$ is not the stabilizer of the base structure $X_0$, since the exceptional components do not yield automorphisms at $0$. Also, if $X_0$ is not exceptional, then Kur=Teich by definition. This proves that i) and ii) are equivalent. Then, ii) and iii) are equivalent by Corollary \ref{thmuniversal1}. Finally, iii) and iv) are equivalent because of the fact that $\Azero$ has finite index in $\Aun$ in the K\"ahler case.
\end{proof}

\begin{remark}
	\label{fakeorbifold}
	Consider the case of compact complex tori. Then $\Azero$ is not trivial, since it contains the translations. So neither $\mathscr{T}(M,V)$ nor $\mathscr{A}_1$ is an orbifold, since their isotropy groups are not finite. However, if we forget about the stack structure, the Teichm\"uller space is naturally a complex manifold. Indeed, roughly speaking, the stack is obtained from this complex manifold by attaching a group of translations to each point. More precisely, it is the universal family of tori, see \cite{LMStacks}, Example 13.1.
\end{remark}

\section{The non-K\"ahler case}
\label{sectionNK}
In this section, we briefly investigate the non-K\"ahler case. 

The atlases of the Kuranishi stacks and of $\mathscr{T}(M,V)$ in Sections \ref{Kurstack} and \ref{atlas} are still valid. However, Proposition \ref{Liebprop} and Corollaries \ref{interlemma}, \ref{2ndcor}, \ref{3rdcor} are no more correct. As a consequence, there may be a countable number of connected components in $\mathcal T_V$; and this number is not stable when restricting to smaller neighborhoods $V$. There may be wandering components. Indeed, taking $V'\subset V$, there is a natural injection of the set of connected components of $\mathcal T_V$ into the set of connected components of $\mathcal T_{V'}$, but it is possible that for a basis of neighborhoods $V\supset V_1\supset V_2\hdots$, all the injection maps are strict. So sequences \eqref{phin} may jump from one component to another and exceptional components do not capture all sequences \eqref{phin}.

For this reason, we have to deal with the target germification of $\mathscr{T}(M,V)$ and it is not even clear that there is a reasonable sense to the assertion: the inclusion of $\mathscr{A}_0$ and $\mathscr{A}_1$ in $\mathscr{T}(M,V)$ is an analytic morphism of stacks with countable fiber.

Also recall that cycles do not always converge in the non-K\"ahler setting, hence case iii) at the end of Section \ref{localTeich} really occurs. So we add the definitions
\begin{definition}
	We say that a component of $\Gamma$ is {\slshape adherent} to $X_0$ if its projection onto $K_0\times K_0$ does not contain $(0,0)$ but is adherent to it. We say that a sequence of components of $\mathscr{C}$ is {\slshape wandering} at $X_0$ if none of them is adherent to $(0,0)$, but their union is.  
\end{definition}
and 
\begin{definition}
		\label{defexceptionalg}
	A point $X_0$ of $\mathscr{T}(M)$ is called {\slshape adherent} if $\Gamma$ has a component adherent to it.
	It is {\slshape wandering} if $\mathscr{C}$ has a wandering sequence at $X_0$.
\end{definition}
We have
\begin{lemma}
	A compact complex manifold satisfies Kur=Teich if and only if it is neither an exceptional, an adherent nor a wandering point of the Teichm\"uller stack.
\end{lemma}
Theorem \ref{2ndmainthm} is no more correct and we do not know if the closure of exceptional, adherent and wandering points has an analytic structure. The main difficulty to argue as in the proof of Theorem \ref{2ndmainthm} is that the map $p$ is no more proper, hence does not send constructible sets onto copnstructible sets. Note that, since we now deal with spaces $\mathcal{T}_V$ with a possibly countable number of connected components, the best result we can hope for would be that the closure of exceptional, adherent and wandering points of the Teichm\"uller stack form a countable union of strict analytic substacks of $\mathscr{T}(M)$.

Finally, still due to the wandering components, Theorem \ref{mainorbifoldthm} is no more correct in the non-K\"ahler setting and we do not have any equivalent.

\section{Open Problems}
There are many open problems around this. We list some of them in this section.
Firstly, at the level of the Teichm\"uller stack/Kuranishi space,

\begin{problem}
	Find exceptional points.
\end{problem}
and, in the non-K\"ahler setting,
\begin{problem}
	Find exceptional, adherent and wandering points.
\end{problem}
\begin{remark}
	Consider the second Hirzebruch surface $\mathbb{F}_2$. It deforms onto $\mathbb{P}^1\times\mathbb{P}^1$. The automorphism $g$ that exchanges the two components of the product $\mathbb{P}^1\times\mathbb{P}^1$ does not extend as an automorphism of $\mathbb{F}_2$. So the corresponding graphs of biholomorphisms extend as singular cycles in $\mathbb{F}_2\times\mathbb{F}_2$. Hence there is a component of singular cycles in the cycle space of $\mathbb{F}_2\times\mathbb{F}_2$ that deform as automorphisms of the nearby structures. This really looks like an exceptional component. However, it is not, for $g$ is clearly not isotopic to the identity.
\end{remark}
Still in the non-k\"ahler setting,
\begin{problem}
	Prove/disprove that the closure of exceptional, adherent and wandering points of the Teichm\"uller stack form an at most countable union of strict analytic substacks of $\mathscr{T}(M)$.
\end{problem}
Then, at the level of compact complex manifolds,
\begin{problem}
Find a compact complex manifold $X_0$ with $\text{Aut}^1(X_0)$ having an infinite number of connected components.
\end{problem}

Such an example would automatically have $\sharp\mathcal T_V$ infinite. But there is no reason for them to be equal. Hence we ask 

\begin{problem}
Find a compact complex manifold $X_0$ with infinite $\sharp\mathcal T_V$ but $\text{Aut}^1(X_0)$ having a finite number of connected components.
\end{problem}

Both problems concern non-K\"ahler manifolds but in the K\"ahler setting, the difference between $\sharp\mathcal T_V$ and $\text{Aut}^1(X_0)/\text{Aut}^0(X_0)$ is also unknown. So we state our last problem

\begin{problem}
Find a compact K\"ahler manifold with $\sharp\mathcal T_V$ different from $\text{Aut}^1(X_0)/\text{Aut}^0(X_0)$. 
\end{problem}

\end{document}